\documentclass[11pt]{article}
\usepackage[margin=1.05in]{geometry}
\usepackage{amsmath,amssymb,amsthm,bm,graphicx,booktabs,multirow}
\usepackage[hidelinks]{hyperref}

\newtheorem{theorem}{Theorem}[section]
\newtheorem{lemma}[theorem]{Lemma}

\theoremstyle{definition}
\newtheorem{definition}[theorem]{Definition}
\newtheorem{example}[theorem]{Example}
\theoremstyle{remark}
\newtheorem{remark}[theorem]{Remark}

\newcommand{\R}{\mathbb{R}}
\newcommand{\PR}{P}

\newcommand{\tr}{\mathrm{tr}}
\newcommand{\diag}{\mathrm{diag}}
\newcommand{\1}{\mathbf{1}}

\title{\bf Pitman closest equivariant estimators under multivariate\\ scale and location--scale models}
\author{Yihong Liu\ and\ Haojin Zhou
\thanks{Corresponding author.
Email: \href{mailto:haojin_zhou@hotmail.com}{haojin\_zhou@hotmail.com}.
ORCiD: 0000-0003-0802-099X.}\\
{\normalsize Pediatric Research Institute}\\
{\normalsize Guangzhou Women and Children's Medical Center}\\
{\normalsize Guangzhou Medical University}\\
{\normalsize Guangzhou, Guangdong, P.R. China}}
\date{}

\begin{document}
\maketitle

\begin{abstract}
\noindent
For multivariate scale and location--scale models with independent components, we
extend the univariate results of Zhou and Nayak (2012) and derive optimum equivariant
estimators under the generalized Pitman closeness criterion. We first show, by a
counterexample, that in the multivariate case the Pitman closeness comparison within the
class of equivariant estimators is not transitive, so that a Pitman closest equivariant
estimator does not exist in general. We then enlarge the transformation group by the
coordinate permutations---equivalently, impose the formal equivariance principle of
Berger (1985) across isomorphic component problems, in the spirit of the separable rules
of Robbins' (1951) compound decision theory---and show that within the resulting
restricted class an optimum is restored. A multivariate median lemma based on
strictly convex losses then yields explicit Pitman closest equivariant estimators of the
scale parameters, powers of the scale parameters, and the location parameters, given by
median-adjusted versions of any given equivariant estimator. Applications to the
multivariate uniform and multivariate normal distributions are worked out in detail.
Monte Carlo experiments for the Rayleigh distribution and for a competing risks model
with Rayleigh component lifetimes confirm that the proposed estimators dominate the
maximum likelihood and Bayes estimators under the Pitman closeness criterion, and a real
industrial data set on ball bearing failure times illustrates the feasibility of the
method in practice.

\medskip
\noindent\emph{Keywords:} Pitman closeness; Equivariance; Permutation invariance;
Compound decision problem; Maximal invariant; Strictly
convex loss; Scale family; Location--scale family; Competing risks; Rayleigh
distribution.

\medskip
\noindent\emph{MSC 2020:} Primary 62F10; secondary 62C15, 62C20.
\end{abstract}

\section{Introduction}

Comparing estimators by the probability that one is closer to the target than the other
goes back to Pitman (1937). Peddada (1985) and Rao et al.\ (1986) extended Pitman's
original definition, which was based on absolute error, to general loss functions, and
the resulting criterion is now commonly known as the (generalized) Pitman closeness
criterion (PCC). Formally, for two estimators $T_1(X)$ and $T_2(X)$ of a parameter
$\theta$ (or a function of it) under a loss function $L(d,\theta)$, the Pitman closeness
of $T_1$ relative to $T_2$ is
\begin{equation}\label{eq:pc}
\mathrm{PC}(T_1,T_2;\theta)=\PR_\theta\!\big[L\big(T_1(X),\theta\big)<L\big(T_2(X),\theta\big)\big],
\end{equation}
and $T_1$ is said to be Pitman closer than $T_2$ if
$\mathrm{PC}(T_1,T_2;\theta)\ge \mathrm{PC}(T_2,T_1;\theta)$ for all $\theta$, with strict
inequality for some $\theta$. We refer to Keating et al.\ (1993) for an extensive
discussion.

Pitman (1937) recognized that the criterion is not transitive, but also showed that it
can be used to select an estimator within certain classes. Nayak (1990) formalized this
idea through equivariance: he showed that if a decision problem is invariant under a
group $G$ and $T_1,T_2$ are equivariant, then $\mathrm{PC}(T_1,T_2;\theta)$ is constant
on the orbits of the parameter space, and hence independent of $\theta$ when the
parameter space is transitive. For univariate location, scale and location--scale models,
this phenomenon together with a median argument produces Pitman closest equivariant
estimators; see Ghosh and Sen (1989), Kubokawa (1990, 1991), Kourouklis (1995a, 1995b,
1996) and, for a unified treatment including prediction, Zhou and Nayak (2012). A key
feature of the approach is its robustness with respect to the loss function: the optimum
estimators depend on the loss only through mild monotonicity or convexity conditions.

With multivariate data now ubiquitous, it is natural to ask how these results extend to
multivariate scale and location--scale families. The multivariate case, however, is
genuinely different. We show in Section~2 that the non-transitivity of the PCC, which is
neutralized by equivariance in univariate location--scale problems, reappears in the
multivariate setting: one can construct equivariant estimators $T_1,T_2,T_3$ such that
each is Pitman closer than the previous one, so that no Pitman closest equivariant
estimator exists within the unrestricted equivariant class. This phenomenon parallels the
well-known inadmissibility of best invariant procedures in multivariate problems under
risk-based criteria (Stein, 1956; Portnoy and Stein, 1971), although under the PCC the
difficulty is non-transitivity rather than inadmissibility. Guided by the formal
equivariance principle (Berger, 1985), we enlarge the diagonal transformation group by
the coordinate permutations: when the $p$ component problems are isomorphic, equivariance
under the enlarged group requires the same estimating function in every component. The
resulting restricted class coincides with the separable rules of compound decision theory
(Robbins, 1951), and within it a Pitman closest member exists and can be derived through
a multivariate extension of the median lemma of Zhou and Nayak (2012), based on strictly
convex losses.

The rest of the paper is organized as follows. In the remainder of this section we state
the multivariate median lemma that will be used repeatedly. Section~2 discusses the
multivariate Pitman closeness comparison, the non-transitivity counterexample, and the
formal equivariance restriction. Section~3 treats the multivariate scale family and
Section~4 the multivariate location--scale family; in each case we characterize the
equivariant estimators through a maximal invariant and derive the Pitman closest
equivariant estimators, with worked examples. Section~5 presents Monte Carlo experiments
and a real data analysis for the Rayleigh distribution and for a competing risks model
with Rayleigh component lifetimes. Section~6 contains concluding remarks.

We close this section with the multivariate median lemma. Throughout, for a univariate
random variable $W$, any number $m$ with $\PR(W<m)\le 1/2$ and $\PR(W\le m)\ge 1/2$ is
called a median of $W$.

\begin{lemma}\label{lem:median}
Let $Z$ and $T$ be random vectors in $\R^p$ with $\PR(T=\mathbf 0)=0$, and let
$\theta\in\R^p$ be a constant vector. Suppose $L$ is a strictly convex function on $\R^p$
with $L(\mathbf 0)=0$. Let $m(Z)$ be a function of $Z$ such that, for each value $z$ of
$Z$, $m(z)$ is a median of the conditional distribution given $Z=z$ of the univariate
random variable $c^*$ defined by
\[
T^\intercal\,\nabla L\big(\theta-c^*T\big)=0 .
\]
Then, for any other (univariate) function $k$ of $Z$,
\begin{equation}\label{eq:medianlemma}
\PR\big[L\big(\theta-m(Z)T\big)<L\big(\theta-k(Z)T\big)\big]\ \ge\ \tfrac12\,\PR\big[k(Z)\ne m(Z)\big].
\end{equation}
\end{lemma}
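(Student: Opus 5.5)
Fix a value $z$ of $Z$ and work conditionally on $Z=z$. On this event $m=m(z)$ and $k=k(z)$ are constants, so it suffices to prove the conditional version
\[
\PR\big[L(\theta-mT)<L(\theta-kT)\,\big|\,Z=z\big]\ \ge\ \tfrac12\,\mathbf 1\{k\ne m\}.
\]
Integrating over the distribution of $Z$ then gives \eqref{eq:medianlemma}. When $k=m$ there is nothing to prove, so assume $k\neq m$.

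The key device is the univariate function $g_t(c)=L(\theta-ct)$ for a fixed nonzero $t\in\R^p$. Since $P(T=\mathbf 0)=0$, $T\ne\mathbf 0$ almost surely. For $t\ne\mathbf 0$, $g_t$ is strictly convex on $\R$ because $c\mapsto\theta-ct$ is affine and injective. Its derivative is $g_t'(c)=-t^\intercal\nabla L(\theta-ct)$. So $c^*$ is exactly the stationary point of $g_t$ at $t=T$, i.e.\ its unique minimizer; this tacitly assumes $L$ is differentiable and the minimum is attained. Strict convexity makes $g_t$ strictly decreasing on $(-\infty,c^*]$ and strictly increasing on $[c^*,\infty)$.

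Next comes the case analysis. Suppose $k>m$. If $c^*\le m$, then $c^*\le m<k$, so $g_T(m)<g_T(k)$ by strict monotonicity on $[c^*,\infty)$. Hence
\[
\{c^*\le m\}\subseteq\{L(\theta-mT)<L(\theta-kT)\}.
\]
Since $m$ is a conditional median of $c^*$, $\PR(c^*\le m\mid Z=z)\ge 1/2$, which gives the bound. Suppose instead $k<m$. If $c^*\ge m$, then $k<m\le c^*$, so $g_T(m)<g_T(k)$ by strict monotonicity on $(-\infty,c^*]$. The median property $\PR(c^*<m\mid Z=z)\le1/2$ gives $\PR(c^*\ge m\mid Z=z)\ge 1/2$, and the same conclusion follows. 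This is exactly the univariate median lemma of Zhou and Nayak (2012) applied along the random ray $\{\theta-cT\}$.

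The main obstacle is rigor in the definition of $c^*$. We must ensure $c^*$ exists, is unique, and is a measurable random variable, so that its conditional distribution and median make sense. Uniqueness follows from strict convexity of $g_T$. Existence is implicitly assumed in the statement, and could fail if $L$ has no minimizer along some ray. I would treat existence as part of the hypothesis, or allow $c^*=\pm\infty$; the monotonicity argument survives if $g_T$ is monotone on all of $\R$. Measurability of $c^*$ follows from writing it as the zero of a jointly continuous function that is monotone in $c$. A subtler point is that only the one-sided strict inequalities from strict monotonicity are needed, so ties $g_T(m)=g_T(k)$ occur only on the complementary events and do not affect the bound.
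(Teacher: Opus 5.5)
Your proof is correct and follows the paper's argument essentially step for step: you condition on $Z=z$, use strict convexity of $c\mapsto L(\theta-cT)$ along the ray (guaranteed by $T\neq\mathbf 0$ a.s.) to get $\{c^*\le m\}\subseteq\{L(\theta-mT)<L(\theta-kT)\}$ when $k>m$ and $\{c^*\ge m\}\subseteq\{L(\theta-mT)<L(\theta-kT)\}$ when $k<m$, invoke the two median inequalities, and integrate over $Z$. Your caveats about existence, differentiability and measurability of $c^*$ are more careful than the paper, which simply asserts that the minimum along the ray is attained (strict convexity alone does not ensure this), and your fix of allowing $c^*=\pm\infty$ works with the same monotonicity argument.
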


\begin{proof}
Let
\[
q(z)=\PR\big[L\big(\theta-m(Z)T\big)<L\big(\theta-k(Z)T\big)\,\big|\,Z=z\big],
\]
so that the left side of (\ref{eq:medianlemma}) equals $E[q(Z)]$. Fix $z$ and write
$c=m(z)$. Since $L$ is strictly convex, it is twice differentiable almost everywhere
(Evans and Gariepy, 1992, p.~242) with positive semidefinite Hessian, and along the ray
$\{\theta-cT:c\in\R\}$ the function $\phi(c)=L(\theta-cT)$ is strictly convex in $c$
because $\PR(T=\mathbf 0)=0$. Hence $\phi$ attains its minimum at $c^*$ satisfying
$\phi'(c^*)=-T^\intercal\nabla L(\theta-c^*T)=0$, and $\phi$ is strictly decreasing to
the left of $c^*$ and strictly increasing to the right of $c^*$. It follows that, for
$k(z)>m(z)$,
\[
c^*\le m(z)<k(z)\ \Longrightarrow\ L\big(\theta-m(z)T\big)<L\big(\theta-k(z)T\big),
\]
and therefore
\[
q(z)\ \ge\ \PR\big[c^*\le m(z)\,\big|\,Z=z\big]\ \ge\ \tfrac12 ,
\]
since $m(z)$ is a conditional median of $c^*$ given $Z=z$. Similarly, if $k(z)<m(z)$,
considering the event $c^*\ge m(z)$ gives $q(z)\ge \PR[c^*\ge m(z)\mid Z=z]\ge 1/2$.
Since $q(z)=0$ when $k(z)=m(z)$, taking expectation over $Z$ completes the proof.
\end{proof}

Lemma~\ref{lem:median} reduces to Lemma~1.1 of Zhou and Nayak (2012) when $p=1$ and the
loss is $h((\theta-cT)/T)$-type with strictly monotone $h$: in the univariate case the
minimizer condition $T^\intercal\nabla L(\theta-c^*T)=0$ is exactly the crossing point at
which the two losses are tied, so a median of $c^*$ is Pitman closest among all
adjustments of the form $k(Z)T$. The strict convexity of $L$ is essential in the
multivariate case; note that additive losses $L(d,\theta)=\sum_{i=1}^p h(d_i,\theta_i)$
with strictly convex univariate $h$ are strictly convex (Boyd and Vandenberghe, 2004,
p.~79).

\section{Pitman closeness and equivariance in the multivariate case}

\subsection{Equivariance and orbit constancy}

The decision-theoretic framework of equivariance is standard (Berger, 1985; Lehmann and
Casella, 1998). A decision problem $(\mathcal X,\mathcal P,\Theta,\mathcal D,L)$ is
invariant under a group $G$ of transformations of $\mathcal X$ if for each $g\in G$ there
exist transformations $\bar g$ of $\Theta$ and $\tilde g$ of $\mathcal D$ such that the
model and the loss are preserved; a decision rule $\delta$ is equivariant if
$\delta(g(x))=\tilde g(\delta(x))$ for all $g\in G$ and $x\in\mathcal X$. The orbit
constancy result of Nayak (1990) carries over to the multivariate case without change,
and we record it for completeness.

\begin{theorem}\label{thm:orbit}
Suppose a decision problem is invariant under a group $G$ and $T_1,T_2$ are two
equivariant decision rules. Then $\mathrm{PC}(T_1,T_2;\theta)$ is constant on the orbits
of $\Theta$; in particular, if $\Theta$ is transitive under $\bar G$, then
$\mathrm{PC}(T_1,T_2;\theta)$ is independent of $\theta$.
\end{theorem}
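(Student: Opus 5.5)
The plan is to prove the orbit-constancy statement directly from the definitions of invariance and equivariance, following Nayak (1990). The dimension $p$ plays no role, because the argument uses only abstract group actions. I will show that for every $g\in G$ and $\theta\in\Theta$,
\[
\mathrm{PC}(T_1,T_2;\bar g\theta)=\mathrm{PC}(T_1,T_2;\theta).
\]
Since the orbit of $\theta$ is $\{\bar g\theta:g\in G\}$, this gives constancy on orbits. If $\bar G$ acts transitively, the whole of $\Theta$ is a single orbit, so $\mathrm{PC}(T_1,T_2;\theta)$ does not depend on $\theta$.

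The key steps are as follows.

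First, recall what invariance of the problem gives us. Model invariance says that if $X\sim P_\theta$, then $gX\sim P_{\bar g\theta}$. Loss invariance says that $L(\tilde g d,\bar g\theta)=L(d,\theta)$ for all $d$ and $\theta$.

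Second, write
\[
\mathrm{PC}(T_1,T_2;\bar g\theta)=P_{\bar g\theta}\big[L(T_1(X),\bar g\theta)<L(T_2(X),\bar g\theta)\big].
\]
By model invariance, $X$ under $P_{\bar g\theta}$ has the same distribution as $gX$ with $X\sim P_\theta$. Hence this probability equals
\[
P_\theta\big[L(T_1(gX),\bar g\theta)<L(T_2(gX),\bar g\theta)\big].
\]

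Third, equivariance gives $T_i(gX)=\tilde g\,T_i(X)$ for $i=1,2$. Applying loss invariance, each loss becomes $L(\tilde g T_i(X),\bar g\theta)=L(T_i(X),\theta)$. The event therefore coincides pointwise with $\{L(T_1(X),\theta)<L(T_2(X),\theta)\}$, and its $P_\theta$-probability is exactly $\mathrm{PC}(T_1,T_2;\theta)$.

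The only step needing care is the distributional identity in the second step. The event $\{x: L(T_1(x),\bar g\theta)<L(T_2(x),\bar g\theta)\}$ must be a measurable set $A$, so that $P_{\bar g\theta}(A)=P_\theta(g^{-1}A)=P_\theta(gX\in A)$. I will assume the standard measurability of $g$, $T_i$ and $L$ that is implicit in the framework of Berger (1985) and Lehmann and Casella (1998). With that in place, the remainder is a direct substitution, and nothing specific to the multivariate setting enters.
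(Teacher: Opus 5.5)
Your proof is correct and is the standard dimension-free argument. The paper gives no proof of its own: it records the result as Nayak's (1990) orbit-constancy theorem and notes that it carries over unchanged to the multivariate case, which is exactly what your substitution establishes (model invariance, then equivariance $T_i(gX)=\tilde g\,T_i(X)$, then loss invariance $L(\tilde g d,\bar g\theta)=L(d,\theta)$, with measurability assumed as in the standard framework).
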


In the univariate scale and location--scale problems of Zhou and Nayak (2012),
Theorem~\ref{thm:orbit} reduces every pairwise comparison of equivariant estimators to a
single parameter value, and the median lemma then produces a Pitman closest equivariant
estimator. In the multivariate case this reduction alone is not enough, as we show next.

\subsection{Non-transitivity within the equivariant class}

\begin{example}\label{ex:nontransitive}
Consider estimating $\bm\theta=(\theta_1,\dots,\theta_p)$ in a multivariate scale
problem (formally defined in Section~3) under a componentwise additive loss
\begin{equation}\label{eq:addloss}
L(\bm d,\bm\theta)=\sum_{i=1}^p h(d_i,\theta_i),
\end{equation}
where $h$ is strictly convex and scale invariant. Let $T_u$ denote the vector whose
$i$-th component is the univariate Pitman closest equivariant estimator of $\theta_i$ of
Zhou and Nayak (2012), computed from the $i$-th component sample alone, and let $T_m$
denote the estimator obtained from Lemma~\ref{lem:median} under the joint loss
(\ref{eq:addloss}) (its explicit form is given in Theorem~\ref{thm:scale} below). Since
the joint adjustment through Lemma~\ref{lem:median} depends on the loss and generally
differs from the componentwise optimum, suppose
$\PR[T_{u,i}\ne T_{m,i}]=1$ for some component $i$. Then by the univariate optimality of
$T_{u,i}$ under its own component loss,
\[
\PR\big[h(T_{u,i},\theta_i)<h(T_{m,i},\theta_i)\big]\ge \tfrac12 \quad\text{for all }\bm\theta,
\]
and, by Lemma~\ref{lem:median} applied to the joint loss,
\[
\mathrm{PC}(T_m,T_u;\bm\theta)\ge \tfrac12 \quad\text{for all }\bm\theta .
\]
Now form $T_e$ by replacing one component of $T_m$ by the corresponding component of
$T_u$. Since each component problem is isomorphic, the one-dimensional comparison gives
$\mathrm{PC}(T_e,T_m;\bm\theta)\ge \mathrm{PC}(T_m,T_e;\bm\theta)$ for all $\bm\theta$,
while by Lemma~\ref{lem:median} $\mathrm{PC}(T_m,T_e;\bm\theta)\ge
\mathrm{PC}(T_e,T_m;\bm\theta)$; and $T_u$ in turn dominates $T_e$ in the modified
component. One thus obtains a cycle
\[
\mathrm{PC}(T_m,T_u)\ge\mathrm{PC}(T_u,T_m),\quad
\mathrm{PC}(T_u,T_e)\ge\mathrm{PC}(T_e,T_u),\quad
\mathrm{PC}(T_e,T_m)\ge\mathrm{PC}(T_m,T_e),
\]
with at least one strict domination, so the PCC is not transitive within the class of
equivariant estimators and no Pitman closest equivariant estimator exists in that class.
Moreover, the construction can be iterated: given any mixed estimator that uses the
univariate-optimal rule on a subset $A$ of the sample space of the maximal invariant and
the joint-optimal rule on its complement, one can choose a disjoint set $B$ with the same
probability and construct yet another estimator that is Pitman closer, so the cycle does
not terminate.
\end{example}

Example~\ref{ex:nontransitive} shows that although $T_e$ improves upon $T_m$ in one
component, applying different estimating rules to isomorphic component problems violates
the formal equivariance principle, which requires isomorphic problems to be solved by the
same rule (Berger, 1985). This motivates the following restriction.

\subsection{A permutation equivariance restriction}\label{sec:restriction}

A constructive way to formalize the requirement that isomorphic component problems be
solved by the same rule is to enlarge the transformation group by the coordinate
permutations. Let $S_p$ denote the symmetric group on $\{1,\dots,p\}$, acting on $p\times
n$ matrices by row permutation: for $\pi\in S_p$ with permutation matrix $P_\pi$, let
$g_\pi(X)=P_\pi X$.

\begin{lemma}\label{lem:formal}
Suppose $X=(\bm x_1,\dots,\bm x_n)$ is a $p\times n$ data matrix whose rows
$X_{i,\cdot}=(X_{i,1},\dots,X_{i,n})$ are independent with
$X_{i,\cdot}\sim \prod_{j=1}^n f(x_{ij}\mid\theta_i)$, and consider estimating
$\bm\theta=(\theta_1,\dots,\theta_p)$ under the additive loss (\ref{eq:addloss}) with a
common univariate loss $h$. Then the problem is invariant under the enlarged group
\begin{equation}\label{eq:enlarged}
G\times S_p=\big\{g_{C,\pi}(X)=CP_\pi X:\ C=\diag(c_1,\dots,c_p),\ c_i>0,\ \pi\in S_p\big\},
\end{equation}
with $\bar g_{C,\pi}(\bm\theta)=CP_\pi\bm\theta$ and $\tilde g_{C,\pi}(\bm
d)=C^rP_\pi\bm d$, and every estimator $\bm d$ equivariant under (\ref{eq:enlarged})
satisfies
\begin{equation}\label{eq:formal}
d_1(X_{i,\cdot})=\cdots=d_p(X_{i,\cdot}),
\end{equation}
i.e.\ the same estimating function is applied to each component sample.
\end{lemma}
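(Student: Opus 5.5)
We prove the two assertions separately: first that the problem is invariant under the enlarged group, then that equivariance under it forces a common estimating function.

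\emph{Invariance.} The scale group $G=\{X\mapsto CX\}$ already preserves the model and the loss. This is the setting of Section~3: $f(x\mid\theta)=\theta^{-1}f_0(x/\theta)$, and $h(d,\theta)=\rho(d/\theta^r)$ when estimating $\theta^r$. So it suffices to check the permutations and then that the two kinds of maps compose to a group.

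For $\pi\in S_p$, the rows of $P_\pi X$ are the rows of $X$ reordered. By independence and the common density $f$, $P_\pi X$ has the law of the model at $P_\pi\bm\theta$, so $\bar g_\pi(\bm\theta)=P_\pi\bm\theta$ maps $\Theta=(0,\infty)^p$ onto itself. Because the loss is additive with a common $h$,
\[
L(P_\pi\bm d,P_\pi\bm\theta)=\sum_i h(d_{\pi^{-1}(i)},\theta_{\pi^{-1}(i)})=L(\bm d,\bm\theta).
\]
For closure, note that $P_\pi C P_\pi^{-1}$ is again a positive diagonal matrix. Hence $\{CP_\pi\}$ is closed under composition and inverses; it is the monomial group $G\rtimes S_p$. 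The induced maps compose accordingly, with $\tilde g_{C,\pi}(\bm d)=C^rP_\pi\bm d$. Combining the scale and permutation steps gives invariance under (\ref{eq:enlarged}).

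\emph{Consequence for equivariant rules.} I interpret (\ref{eq:formal}) in the setting where each $d_i$ depends only on its own row, $\bm d(X)=(d_1(X_{1,\cdot}),\dots,d_p(X_{p,\cdot}))$. This is the separable structure the paper is targeting. If the rules are not assumed separable, the statement should read
\[
d_i(X)=d_1(P_{\tau_i}X),
\]
where $\tau_i$ is the transposition $(1\,i)$; that is, each component is the first component applied to the data with rows $1$ and $i$ swapped. I would state this general form and then specialize.

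Take $C=I$ and $\pi$ a transposition $\tau=(i\,j)$. Equivariance gives $\bm d(P_\tau X)=P_\tau\bm d(X)$. Reading off the $i$-th coordinate, the left side is $d_i$ evaluated at the matrix whose $i$-th row is $X_{j,\cdot}$. The right side is $d_j(X_{j,\cdot})$. Under separability this reads $d_i(X_{j,\cdot})=d_j(X_{j,\cdot})$. Since $X_{j,\cdot}$ ranges over the full component sample space, $d_i\equiv d_j$ as functions, and letting $i,j$ vary yields (\ref{eq:formal}).

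The main obstacle is this last step. Without separability, equivariance only yields the twisted identity above rather than literal equality of functions. Some reduction is therefore needed, either assuming the rule is separable or restricting attention to the class of Section~3, where estimators take the form $T\cdot k(Z)$ with $Z$ the maximal invariant. I would make this assumption explicit. The scale part $C$ plays no role in this step beyond confirming that the common function is itself scale equivariant of degree $r$.
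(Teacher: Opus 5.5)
Your proof is correct and follows the paper's argument. Invariance comes from the exchangeability of the rows and the additivity of the loss, and the conclusion from equivariance under a transposition with $C=I$, read off coordinatewise; the paper evaluates at a matrix whose rows $i$ and $j$ coincide, while you use a general $X$, and under separability both give $d_i\equiv d_j$. Your explicit separability assumption is exactly what the paper's phrase ``$d_i=d_j$ as functions of a component sample'' tacitly relies on, and your checks of the scale part and of closure (the group is really the semidirect product $G\rtimes S_p$) supply details the paper leaves out.
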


\begin{proof}
Invariance of the model follows from the independence and identical distribution of the
rows, and invariance of the loss from
$L(P_\pi\bm d,P_\pi\bm\theta)=\sum_i h(d_{\pi(i)},\theta_{\pi(i)})=L(\bm d,\bm\theta)$.
For equivariance under a transposition $\pi=(i\,j)$, we have
$\bm d(P_\pi X)=P_\pi\bm d(X)$; evaluating component $i$ at a data matrix whose rows $i$
and $j$ agree gives $d_i=d_j$ as functions of a component sample, and varying over all
transpositions yields (\ref{eq:formal}).
\end{proof}

Three comments on this restriction are in order. First, the class defined by
(\ref{eq:formal}) is not new in itself: it is exactly the class of \emph{separable rules}
of Robbins' (1951) compound decision theory, which, together with its empirical Bayes
developments, studies precisely $p$ independent, structurally identical estimation tasks
under an additive loss and seeks optima within rules of the form
$\big(f(X_{1,\cdot}),\dots,f(X_{p,\cdot})\big)$ with a common $f$. What is new here is
the use of this restriction under the Pitman closeness criterion: in compound decision
theory separability is a convenience within which risk or empirical Bayes benchmarks are
defined, whereas here an analogue of the restriction is \emph{necessary} for a Pitman
closest equivariant estimator to exist at all, in view of
Example~\ref{ex:nontransitive}.

Second, the restriction has a clean interpretation in terms of the classical guideline
that the transformation group should match the parameter space (cf.\ Fraser, 1968; Eaton,
1989). The essential requirement behind that guideline is that the induced group $\bar G$
act transitively---and ideally freely---on $\Theta$, so that $\Theta$ is a single orbit
and equivariant comparisons become parameter-free. For the diagonal group $G$, $\bar
G\cong(\R_+)^p$ acts freely and transitively on $\Theta=(\R_+)^p$, and adjoining $S_p$
preserves transitivity while adding a genuine symmetry of the model, the loss, and the
estimand. The induced action of $(\R_+)^p\rtimes S_p$ is no longer free, but only on the
measure-zero diagonals $\{\theta_i=\theta_j\}$, and since $S_p$ is finite this does not
affect any Haar-measure considerations. The more accurate general principle, which the
present problem illustrates, is to use the largest group leaving the \emph{entire}
decision problem invariant whose induced action on $\Theta$ is transitive; the choice of
group is consequential, since it affects even the existence and admissibility of optimal
invariant rules (Kiefer, 1957; Zhou and Nayak, 2014). Note also that the enlargement
\emph{derives} the restricted class rather than merely justifying it: equivariance under
(\ref{eq:enlarged}), applied to estimators of the form $\bm w(Z)\odot\bm\delta^*(X)$ of
Theorem~\ref{thm:maxinv}, forces $w_1=\cdots=w_p$ with a common scalar $w(Z)$ invariant
under row permutations of $Z$.

Third, the restriction delineates its own boundary: it is available only when the
component problems are genuinely isomorphic, so that $S_p$ is a symmetry of the loss. In
the competing risks application of Section~5.2 the component families carry distinct
parameters $(\theta_1,\theta_2)$ entering the likelihood asymmetrically, the parameter
space is not transitive, and neither the enlargement nor the parameter-free reduction of
Theorem~\ref{thm:orbit} applies.

\begin{remark}\label{rem:class}
Even under (\ref{eq:formal}), the non-transitivity of Example~\ref{ex:nontransitive}
persists if the adjustment factor is allowed to vary with the maximal invariant in an
arbitrary way: estimators that switch between two admissible rules on disjoint subsets of
the invariant's range can produce non-terminating cycles. In view of this, and guided by
the role of completeness and sufficiency in the univariate theory (cf.\ Remark~2.3 of
Zhou and Nayak, 2012), in the sequel we restrict attention to equivariant estimators of
the form
\begin{equation}\label{eq:class}
\bm\delta(X)=w(Z)\,\bm\delta^*(X),
\end{equation}
where $\bm\delta^*$ is a fixed equivariant estimator, $Z$ is the maximal invariant, and
$w$ is a common scalar adjustment shared by all components. This class still permits
borrowing strength across components---rules $\delta_i(X)=f(X_{i,\cdot};X_{-i,\cdot})$
with symmetric dependence on the other rows satisfy (\ref{eq:formal})---while excluding
exactly the pathological mixing rules behind the cycles. Within it, a Pitman closest
member exists.
\end{remark}

\section{Multivariate scale family}\label{sec:scale}

\begin{definition}[Multivariate scale family]\label{def:scale}
Let $X=(\bm X_1,\dots,\bm X_n)$ be a $p\times n$ random matrix with joint density
\begin{equation}\label{eq:scalepdf}
f_P(\bm x_1,\dots,\bm x_n)=|P|^{-n} f\big(P^{-1}\bm x_1,\dots,P^{-1}\bm x_n\big),
\qquad \bm x_i\in\R^p,
\end{equation}
where $f$ is a given density and $P=\diag(\theta_1,\dots,\theta_p)$, $\theta_i>0$, is an
unknown diagonal scale matrix.
\end{definition}

Consider estimating $\bm\theta^{(r)}=(\theta_1^r,\dots,\theta_p^r)$ for a given
$r\in\R\setminus\{0\}$ under the loss
\[
L(\bm d,\bm\theta)=\sum_{i=1}^p h\!\left(\frac{d_i}{\theta_i^r}\right),
\]
where $h$ is strictly convex with $h(1)=0$; the scaled squared error loss
$h(t)=(t-1)^2$ is a leading special case. The problem is invariant under
$G=\{g_C(X)=CX\}$ with
\[
g_C(\bm\theta)=C\bm\theta,\qquad \tilde g_C(\bm d)=C^r\bm d,
\]
and an estimator $T(X)$ is equivariant if and only if
\begin{equation}\label{eq:scaleequiv}
T(CX)=C^rT(X)\quad\text{for all }C=\diag(c_1,\dots,c_p),\ c_i>0.
\end{equation}
The following representation result is the multivariate analogue of the univariate
characterization (Lehmann and Casella, 1998, p.~168); we give a proof because published
multivariate versions are incomplete.

\begin{theorem}\label{thm:maxinv}
Let $X$ have density (\ref{eq:scalepdf}) and suppose $\PR(X_{i,1}=0)=0$ for all $i$. Let
$\bm\delta^*(X)$ be an equivariant estimator of $\bm\theta^{(r)}$ with positive
components a.e., and define the $p\times n$ matrix $Z=(z_{i,j})$ by
\begin{equation}\label{eq:Z}
z_{i,j}=\frac{x_{i,j}}{x_{i,1}}\ (j\ne 1),\qquad
z_{i,1}=\frac{x_{i,1}}{|x_{i,1}|}.
\end{equation}
Then $Z$ is a maximal invariant under $G$, and $\bm\delta(X)$ is an equivariant
estimator of $\bm\theta^{(r)}$ if and only if there exists a vector function
$\bm w(Z)=(w_1(Z),\dots,w_p(Z))$ such that
\begin{equation}\label{eq:repr}
\bm\delta(X)=\bm w(Z)\odot\bm\delta^*(X),
\end{equation}
where $\odot$ denotes the Hadamard (componentwise) product.
\end{theorem}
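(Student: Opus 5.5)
The plan has two parts: first show that $Z$ is a maximal invariant, then derive the representation (\ref{eq:repr}) from equivariance. Throughout, restrict to the full-measure set $\{x: x_{i,1}\neq 0 \text{ for all } i\}$. This is harmless because $\PR(X_{i,1}=0)=0$, and this set is itself invariant under $G$.

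\emph{Invariance.} For $C=\diag(c_1,\dots,c_p)$ with $c_i>0$, row $i$ of $CX$ is $c_i X_{i,\cdot}$. The ratios $x_{i,j}/x_{i,1}$ are unchanged because $c_i$ cancels. The sign $x_{i,1}/|x_{i,1}|$ is unchanged because $c_i>0$. Hence $Z(CX)=Z(X)$.

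\emph{Maximality.} Suppose $Z(X)=Z(Y)$. Then for each $i$ the signs of $x_{i,1}$ and $y_{i,1}$ agree, so $c_i:=y_{i,1}/x_{i,1}>0$. For $j\ne1$ we get $y_{i,j}=z_{i,j}y_{i,1}=c_i z_{i,j}x_{i,1}=c_i x_{i,j}$. Therefore $Y=CX$ with $C=\diag(c_1,\dots,c_p)\in G$, which proves maximality. The key point is that the group acts row by row with an independent positive scalar on each row, so maximality decouples into $p$ univariate problems.

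\emph{Sufficiency of (\ref{eq:repr}).} If $\bm\delta=\bm w(Z)\odot\bm\delta^*$, then
\[
\bm\delta(CX)=\bm w(Z)\odot C^r\bm\delta^*(X)=C^r\bigl(\bm w(Z)\odot\bm\delta^*(X)\bigr),
\]
using the invariance of $Z$ and the fact that the diagonal matrix $C^r$ commutes with the Hadamard product. So $\bm\delta$ satisfies (\ref{eq:scaleequiv}).

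\emph{Necessity.} Let $\bm\delta$ be equivariant. On the set where $\delta^*_i(X)>0$ for all $i$, define $u_i(X)=\delta_i(X)/\delta^*_i(X)$. By (\ref{eq:scaleequiv}) applied to both estimators, $u_i(CX)=c_i^r\delta_i(X)/(c_i^r\delta^*_i(X))=u_i(X)$, so each $u_i$ is $G$-invariant. A standard fact is that any invariant function is a function of a maximal invariant: if $Z(X)=Z(Y)$ then $Y=CX$ for some $C\in G$, hence $u_i(Y)=u_i(X)$. So $u_i=w_i\circ Z$ for some $w_i$ defined on the range of $Z$, which gives (\ref{eq:repr}).

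\emph{Main obstacle: the null set where some $\delta^*_i$ is nonpositive.} This set is $G$-invariant, since $\delta^*_i(CX)=c_i^r\delta^*_i(X)$ preserves sign. So the identity $\bm\delta=\bm w(Z)\odot\bm\delta^*$ holds almost everywhere, and I would state the representation in the a.e. sense. If an everywhere statement is wanted, I would use the explicit equivariant choice $\bm\delta^*_0(X)=(|x_{1,1}|^r,\dots,|x_{p,1}|^r)$, which is strictly positive wherever every $x_{i,1}\ne0$. Representing through $\bm\delta^*_0$ first and then converting to $\bm\delta^*$ via the invariant ratio $\delta^*_i/\delta^*_{0,i}$, itself a function of $Z$, gives the result. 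Measurability of $w_i$ follows by taking the explicit choice $w_i(z)=u_i(\tilde x(z))$, where $\tilde x(z)$ is the section $\tilde x_{i,1}=z_{i,1}$, $\tilde x_{i,j}=z_{i,1}z_{i,j}$. This section satisfies $Z(\tilde x(z))=z$ and is measurable.
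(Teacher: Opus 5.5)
Your proof is correct and follows essentially the paper's route: invariance of the ratios $\delta_i/\delta_i^*$, reduction to $Z$ through the normalizing section $\tilde x(z)$, and maximality via $c_i=y_{i,1}/x_{i,1}$. Your section $\tilde x(z)$ is exactly the paper's $C^*X$ with $C^*=\diag(|x_{1,1}|^{-1},\dots,|x_{p,1}|^{-1})$, and you also check invariance of $Z$, the converse direction, and the null sets, which the paper leaves implicit. One caveat on your optional ``everywhere'' refinement: it cannot reach points where some $\delta_i^*=0$, since there $w_i(Z)\,\delta_i^*=0$ is forced but an equivariant $\delta_i$ need not vanish, so the representation should be stated almost everywhere, as you first proposed.
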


\begin{proof}
Equivariance of $\bm\delta$ and $\bm\delta^*$ gives
$\delta_i(CX)=c_i^r\delta_i(X)$ and $\delta_i^*(CX)=c_i^r\delta_i^*(X)$, so the ratio
$w_i(X)=\delta_i(X)/\delta_i^*(X)$ is invariant: $w_i(CX)=w_i(X)$. Conversely every
function of the maximal invariant $Z$ is invariant. To see that an invariant $w_i$ must
depend on $X$ only through $Z$, take
$C^*=\diag(|x_{1,1}|^{-1},\dots,|x_{p,1}|^{-1})$; then $w_i(X)=w_i(C^*X)$, and the matrix
$C^*X$ is exactly determined by $Z$ in (\ref{eq:Z}). Hence $w_i(X)=w_i(Z)$, proving
(\ref{eq:repr}). That $Z$ is maximal invariant follows from the same construction: if
$Z(x)=Z(x')$ then $x'=Cx$ for $C=\diag(x'_{1,1}/x_{1,1},\dots)$.
\end{proof}

Under the formal equivariance restriction of Lemma~\ref{lem:formal} and the additive
common loss, $w_1(Z)=\cdots=w_p(Z)=w(Z)$, say, and we consider the class
(\ref{eq:class}). Since the parameter space is transitive under $\bar G$, by
Theorem~\ref{thm:orbit} the Pitman closeness of
$\bm\delta_a(X)=w_a(Z)\bm\delta^*(X)$ relative to $\bm\delta_b(X)=w_b(Z)\bm\delta^*(X)$
is independent of $\bm\theta$ and may be computed under $\bm\theta=\1$:
\begin{equation}\label{eq:pcscale}
\mathrm{PC}(\bm\delta_a,\bm\delta_b;\bm\theta)
=\PR_{\1}\!\big[L\big(w_a(Z)\bm\delta^*(X),\1\big)<L\big(w_b(Z)\bm\delta^*(X),\1\big)\big].
\end{equation}
Applying Lemma~\ref{lem:median} with $\theta=\1$ and $T=\bm\delta^*(X)$ yields the main
result of this section.

\begin{theorem}\label{thm:scale}
Let $X$ have density (\ref{eq:scalepdf}) and let $\bm T(X)=(T_1(X),\dots,T_p(X))$ be an
equivariant estimator of $\bm\theta^{(r)}$ with $T_i(X)>0$ a.e. Suppose the loss
$L(\bm d,\bm\theta)=\sum_{i=1}^p h(d_i/\theta_i^r)$ is strictly convex. Then, within the
class (\ref{eq:class}), a Pitman closest equivariant estimator of
$\bm\theta^{(r)}$ is
\begin{equation}\label{eq:pcescale}
\bm\delta_{\pi}(X)=m(Z)\,\bm T(X),
\end{equation}
where $m(Z)$ is a median of the conditional distribution, given $Z$ and
$\bm\theta=\1$, of the univariate random variable $c^*$ defined by
\begin{equation}\label{eq:cstar}
\bm T(X)^\intercal\,\nabla L\big(\1-c^*\bm T(X)\big)=0 .
\end{equation}
\end{theorem}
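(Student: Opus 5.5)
The plan is to reduce the comparison to the single parameter value $\bm\theta=\1$ and then read off the conclusion from Lemma~\ref{lem:median}. First we check that $\bm\delta_\pi=m(Z)\bm T(X)$ lies in the class (\ref{eq:class}) with $\bm\delta^*=\bm T$. The function $m$ depends on $X$ only through $Z$. Since $Z$ is maximal invariant by Theorem~\ref{thm:maxinv}, $m(Z)$ is invariant, so $\bm\delta_\pi(CX)=m(Z)C^r\bm T(X)=C^r\bm\delta_\pi(X)$ and $\bm\delta_\pi$ is equivariant. Next, take any competitor $\bm\delta_b$ in the class. If it is built on a different base estimator $\bm\delta^*$, Theorem~\ref{thm:maxinv} (together with the common-scalar restriction) lets us rewrite it as $\bm\delta_b=k(Z)\bm T(X)$ for a scalar function $k$. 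Concretely, $k=w_b\,\delta^*_i/T_i$, and this ratio is invariant and common across components. So every member of the class is $k(Z)\bm T(X)$ for some scalar $k$.

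By Theorem~\ref{thm:orbit} and (\ref{eq:pcscale}), both $\mathrm{PC}(\bm\delta_\pi,\bm\delta_b;\bm\theta)$ and $\mathrm{PC}(\bm\delta_b,\bm\delta_\pi;\bm\theta)$ are independent of $\bm\theta$ and may be computed under $P_{\1}$. Under $\bm\theta=\1$ the loss is $L(\bm d,\1)=\sum_i h(d_i)$. Write $\tilde L(\bm u)=\sum_i h(1-u_i)$, so that $L(m\bm T,\1)=\tilde L(\1-m\bm T)$ in the form required by Lemma~\ref{lem:median}. Then $\tilde L$ is strictly convex with $\tilde L(\mathbf 0)=0$. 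The condition $P(\bm T=\mathbf 0)=0$ holds because $T_i>0$ a.e.

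Applying Lemma~\ref{lem:median} with $\theta=\1$, $T=\bm T(X)$ and $c^*$ defined by (\ref{eq:cstar}) gives
\[
\PR_{\1}\big[L(m(Z)\bm T,\1)<L(k(Z)\bm T,\1)\big]\ \ge\ \tfrac12\,\PR_{\1}[k(Z)\ne m(Z)].
\]
For the reverse probability, the event $\{L(k\bm T,\1)<L(m\bm T,\1)\}$ is contained in $\{k\ne m\}$. It is also disjoint from the event just bounded. Hence $\mathrm{PC}(\bm\delta_b,\bm\delta_\pi)\le \PR[k\ne m]-\mathrm{PC}(\bm\delta_\pi,\bm\delta_b)\le \tfrac12\PR[k\ne m]\le \mathrm{PC}(\bm\delta_\pi,\bm\delta_b)$. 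This holds for all $\bm\theta$, which is the Pitman-closest property within the class (\ref{eq:class}). When $\PR[k\ne m]>0$ the domination is in the weak sense of the definition. Strictness would need $P(c^*=m(Z)\mid Z)$-type ties to vanish, and I would note this without pursuing it.

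The main obstacle is making the median well defined and measurable. We need $c^*$ to exist and be unique for a.e.\ $X$, i.e.\ the strictly convex function $c\mapsto\sum_i h(1-cT_i)$ must attain its minimum. Existence needs coercivity of $h$ along the ray; for example, $h(1)=0$ with strict convexity gives growth on at least one side. The sign of $\nabla L$ in (\ref{eq:cstar}) must also be matched to the parametrization $\1-c^*\bm T$, and for non-differentiable $h$ the equation should be read as a subgradient condition. I would handle this by defining $c^*$ as the unique minimizer of $\phi(c)=L(c\bm T,\1)$. I would then pick a measurable version of the conditional median via regular conditional distributions given $Z$. Only the monotonicity of $\phi$ on either side of $c^*$, as in the proof of Lemma~\ref{lem:median}, is actually used.
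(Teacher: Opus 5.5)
Your argument is essentially the paper's. You reduce to $\bm\theta=\1$ by Theorem~\ref{thm:orbit} and (\ref{eq:pcscale}), then apply Lemma~\ref{lem:median} conditionally on $Z$ with $T=\bm T(X)$. The pieces you add are correct and fill steps the paper leaves implicit:
writing the loss at $\bm\theta=\1$ as $\tilde L(\1-c\bm T)$ with $\tilde L(\bm u)=\sum_i h(1-u_i)$, so that $c^*$ is the minimizer of $c\mapsto\sum_i h(cT_i)$, which is what Example~\ref{ex:uniform} actually computes; and the disjointness argument that turns the lemma's bound into $\mathrm{PC}(\bm\delta_\pi,\bm\delta_b)\ge\mathrm{PC}(\bm\delta_b,\bm\delta_\pi)$, which the paper never states.

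One step should be deleted: a competitor $w_b(Z)\bm\delta^*(X)$ built on a different base cannot in general be rewritten as $k(Z)\bm T(X)$. Each ratio $\delta^*_i/T_i$ is invariant, hence a function of $Z$, but nothing forces it to be the same for all $i$. For instance, with uniform data, $\delta^*_i=X_{i,1}^r$ and $T_i=X_{i,(n)}^r$ treat all rows alike, yet $\delta^*_i/T_i=(X_{i,1}/X_{i,(n)})^r$ varies with $i$. The step is harmless only because it is unnecessary. The class (\ref{eq:class}) in the theorem is built on the fixed base $\bm\delta^*=\bm T$, so every competitor is $k(Z)\bm T(X)$ by definition, and the theorem asserts nothing about other bases.

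On the existence of $c^*$, strict convexity with $h(1)=0$ is not enough. For example, $h(t)=e^{1-t}-1$ makes $c\mapsto\sum_i h(cT_i)$ strictly decreasing when all $T_i>0$, so there is no minimizer. What suffices is $h\ge 0$. Then $c\mapsto\sum_i h(cT_i)$ decreases for $c\le\min_i 1/T_i$ and increases for $c\ge\max_i 1/T_i$, so its unique minimizer exists.
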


\begin{proof}
The loss is strictly convex as a sum of strictly convex functions. In
(\ref{eq:pcscale}), apply Lemma~\ref{lem:median} conditionally on $Z$ with $\theta=\1$
and $T=\bm T(X)$: for any competing adjustment $k(Z)$,
\[
\PR\big[L\big(m(Z)\bm T(X),\1\big)<L\big(k(Z)\bm T(X),\1\big)\big]
\ge \tfrac12\,\PR[k(Z)\ne m(Z)],
\]
which, by Theorem~\ref{thm:orbit}, holds for all $\bm\theta$.
\end{proof}

\begin{remark}\label{rem:basu}
As in the univariate case, one should take $\bm T(X)$ to be a function of a complete
sufficient statistic whenever one exists. Since the maximal invariant $Z$ is ancillary,
Basu's theorem then gives independence of $\bm T(X)$ and $Z$, and $m(Z)$ reduces to the
unconditional median of $c^*$ under $\bm\theta=\1$, which can be computed once and
tabulated.
\end{remark}

\begin{example}[Multivariate uniform distribution]\label{ex:uniform}
Let the columns of $X$ be i.i.d.\ from the $p$-dimensional rectangular uniform
distribution $U_p(\mathbf 0,\bm\theta)$, $\theta_i>0$, and consider estimating
$(\theta_1^r,\dots,\theta_p^r)$ under the scaled squared error loss
$L(\bm d,\bm\theta)=\sum_{i=1}^p\big[(d_i-\theta_i^r)/\theta_i^r\big]^2$. Let
$X_{i,(n)}=\max\{x_{i,1},\dots,x_{i,n}\}$. Then
$\bm T(X)=\big(X_{1,(n)}^r,\dots,X_{p,(n)}^r\big)$ is an equivariant estimator, and
(\ref{eq:cstar}) gives
\[
2\bm T(X)^\intercal\big(c^*\bm T(X)-\1\big)=0
\ \Longrightarrow\
c^*=\frac{\sum_{i=1}^p X_{i,(n)}^r}{\sum_{i=1}^p X_{i,(n)}^{2r}} .
\]
The Pitman closest equivariant estimator is
$\bm\delta_\pi(X)=m(Z)\,\bm T(X)$, and since
$(X_{1,(n)},\dots,X_{p,(n)})$ is complete and sufficient, by Remark~\ref{rem:basu} $m(Z)$
is the unconditional median of $c^*$ under $\bm\theta=\1$.
\end{example}

\begin{example}[Normal variances, zero means]\label{ex:normalscale}
Let the columns of $X$ be i.i.d.\ $N_p(\mathbf 0,\Sigma)$ with
$\Sigma=\diag(\sigma_1^2,\dots,\sigma_p^2)$, and consider estimating
$(\sigma_1^2,\dots,\sigma_p^2)$ under Anderson's quadratic loss
$L(D,\Sigma)=\tr\big(D\Sigma^{-1}-I\big)^2$, which for diagonal
$D=\diag(d_1,\dots,d_p)$ reduces to $\sum_{i=1}^p(d_i/\sigma_i^2-1)^2$. Let
$\bm T(X)=\big(\sum_{j=1}^n x_{1,j}^2,\dots,\sum_{j=1}^n x_{p,j}^2\big)$, an equivariant
estimator. Then (\ref{eq:cstar}) yields
\[
c^*=\frac{\sum_{i=1}^p\sum_{j=1}^n x_{i,j}^2}{\sum_{i=1}^p\big(\sum_{j=1}^n x_{i,j}^2\big)^2},
\]
and the Pitman closest equivariant estimator is $\bm\delta_\pi(X)=m(Z)\bm T(X)$. Since
$\bm T(X)$ is a function of the complete sufficient statistic, $m(Z)$ is the
unconditional median of $c^*$ under $\Sigma=I$.
\end{example}

\section{Multivariate location--scale family}\label{sec:locscl}

\begin{definition}[Multivariate location--scale family]\label{def:ls}
Let $X=(\bm X_1,\dots,\bm X_n)$ be a $p\times n$ random matrix with joint density
\begin{equation}\label{eq:lspdf}
f_{P,\bm\alpha}(\bm x_1,\dots,\bm x_n)
=|P|^{-n} f\big(P^{-1}(\bm x_1-\bm\alpha),\dots,P^{-1}(\bm x_n-\bm\alpha)\big),
\end{equation}
where $f$ is given, $\bm\alpha\in\R^p$ is an unknown location vector and
$P=\diag(\beta_1,\dots,\beta_p)$, $\beta_i>0$, an unknown diagonal scale matrix.
\end{definition}

The family (\ref{eq:lspdf}) is invariant under
$G=\{g_{\bm a,C}:\bm a\in\R^p,\ C=\diag(c_1,\dots,c_p),\ c_i>0\}$ with
\[
g_{\bm a,C}(X)=CX+\bm a\otimes\1^\intercal,\qquad
g_{\bm a,C}(\bm\alpha,\bm\beta)=(C\bm\alpha+\bm a,\ C\bm\beta).
\]

\subsection{Estimating powers of the scale parameters}

Consider estimating $\bm\beta^{(r)}=(\beta_1^r,\dots,\beta_p^r)$ under
$L(\bm d,\bm\beta)=\sum_{i=1}^p h(d_i/\beta_i^r)$ with strictly convex $h$, where
$\tilde g_{\bm a,C}(\bm d)=C^r\bm d$. An equivariant estimator satisfies
\begin{equation}\label{eq:lsequiv}
T(CX+\bm a\otimes\1^\intercal)=C^rT(X)
\end{equation}
and must in particular be location invariant. Let
$\bm U_i=\bm X_i-\bm X_n$, $i=1,\dots,n-1$; then $U=(\bm U_1,\dots,\bm U_{n-1})$ is a
maximal location invariant (Nayak, 1990), and every equivariant estimator of
$\bm\beta^{(r)}$ is a function of $U$. The density of $U$ has the form
\[
f_{P,\bm\alpha}(\bm u_1,\dots,\bm u_{n-1})
=|P|^{-(n-1)} f_1\big(P^{-1}\bm u_1,\dots,P^{-1}\bm u_{n-1}\big)
\]
for some $f_1$, so the problem reduces to the scale family of Section~\ref{sec:scale}.
Defining $Z$ from $U$ as in (\ref{eq:Z}) and applying Theorem~\ref{thm:scale} gives the
following.

\begin{theorem}\label{thm:lsscale}
Let $X$ have density (\ref{eq:lspdf}) and let $\bm T(U)=(T_1(U),\dots,T_p(U))$ be an
equivariant estimator of $\bm\beta^{(r)}$ based on $U$ with $T_i(U)>0$ a.e. Under the
strictly convex loss $L(\bm d,\bm\beta)=\sum_{i=1}^p h(d_i/\beta_i^r)$, a Pitman closest
equivariant estimator of $\bm\beta^{(r)}$ within the class (\ref{eq:class}) is
\[
\bm\delta_\pi(X)=m(Z)\,\bm T(U),
\]
where $m(Z)$ is a median of the conditional distribution given $Z$ and
$(\bm\alpha,\bm\beta)=(\mathbf 0,\1)$ of $c^*$ defined by
$\bm T(U)^\intercal\nabla L(\1-c^*\bm T(U))=0$.
\end{theorem}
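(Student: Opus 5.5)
The plan is to reduce Theorem~\ref{thm:lsscale} to Theorem~\ref{thm:scale} by passing to the location-invariant differences $U$, and then to carry over that proof after checking each of its ingredients for the reduced problem.

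\emph{Step 1: equivariant estimators depend on $X$ only through $U$.} Put $C=I$ in (\ref{eq:lsequiv}). This gives $\bm T(X+\bm a\otimes\1^\intercal)=\bm T(X)$ for all $\bm a\in\R^p$. So every equivariant estimator of $\bm\beta^{(r)}$ is location invariant. Since $U=(\bm X_1-\bm X_n,\dots,\bm X_{n-1}-\bm X_n)$ is a maximal invariant under translations, every such estimator is a function of $U$. Under $g_{\bm a,C}$, $U$ is mapped to $CU$, and the equivariance condition becomes $\bm T(CU)=C^r\bm T(U)$. This is exactly (\ref{eq:scaleequiv}) with $U$ in place of $X$.

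\emph{Step 2: $U$ follows a scale family.} Write $\bm X_j=\bm\alpha+P\bm Y_j$, where $(\bm Y_1,\dots,\bm Y_n)$ has density $f$. Then $\bm U_j=P(\bm Y_j-\bm Y_n)$. The law of $U$ is therefore free of $\bm\alpha$ and has the density displayed before the theorem, with $f_1$ the density of $(\bm Y_j-\bm Y_n)_{j<n}$. This is the scale model (\ref{eq:scalepdf}) with $n-1$ columns and $\bm\theta=\bm\beta$. Two regularity conditions need checking:
\begin{itemize}
\item $\PR(U_{i,1}=0)=0$, which holds because $f$ is a density (absolute continuity);
\item $T_i(U)>0$ a.e., which is assumed.
\end{itemize}
Then Theorem~\ref{thm:maxinv}, applied to $U$, shows that $Z$ built from $U$ via (\ref{eq:Z}) is a maximal invariant for the full group $G$. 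It also shows that the class (\ref{eq:class}) is $\{w(Z)\bm T(U)\}$.

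\emph{Step 3: reduction to a fixed parameter and the median lemma.} The induced group acts transitively on $\R^p\times(\R_+)^p$ via $(\bm\alpha,\bm\beta)\mapsto(C\bm\alpha+\bm a,C\bm\beta)$. By Theorem~\ref{thm:orbit}, $\mathrm{PC}(w_a\bm T,w_b\bm T;(\bm\alpha,\bm\beta))$ therefore equals its value at $(\mathbf 0,\1)$. We then apply Lemma~\ref{lem:median} with the following choices:
\begin{itemize}
\item the vector $T=\bm T(U)$, with $\PR(T=\mathbf 0)=0$ because its components are positive a.e.;
\item $\theta=\1$;
\item the strictly convex loss $\bm v\mapsto\sum_i h(1-v_i)$, so that $L(c\bm T,\1)$ is this loss evaluated at $\1-c\bm T$, consistent with how (\ref{eq:cstar}) is written;
\item conditioning on $Z$.
\end{itemize}
For any $k(Z)$ this gives $\mathrm{PC}(m\bm T,k\bm T)\ge\frac12\PR[k\ne m]\ge\mathrm{PC}(k\bm T,m\bm T)$. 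The last inequality holds because the events $\{L(m\bm T)<L(k\bm T)\}$ and $\{L(k\bm T)<L(m\bm T)\}$ are disjoint subsets of $\{k\ne m\}$, so the two probabilities sum to at most $\PR[k\ne m]$.

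The main obstacle is Step 2: confirming that the reduction to $U$ loses nothing. This requires verifying three things:
\begin{itemize}
\item the group acting on the $U$-problem is exactly the diagonal scale group, with the translations acting trivially;
\item transitivity for the original problem still lets Theorem~\ref{thm:orbit} remove the dependence on $\bm\alpha$;
\item the density $f_1$ has the required form, which follows directly from the representation $\bm U_j=P(\bm Y_j-\bm Y_n)$.
\end{itemize}
Once these hold, the rest is the proof of Theorem~\ref{thm:scale} verbatim.
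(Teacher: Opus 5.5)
Your proposal is correct and follows the same route as the paper. Location invariance forces dependence on $X$ only through $U$, the law of $U$ is the diagonal scale family with $n-1$ columns, and Theorem~\ref{thm:scale} (Theorem~\ref{thm:orbit} plus Lemma~\ref{lem:median}, conditioning on $Z$ built from $U$) then gives the result; your regularity checks, your reading of (\ref{eq:cstar}) through $\bm v\mapsto\sum_i h(1-v_i)$, and your closing step $\mathrm{PC}(m\bm T,k\bm T)\ge\tfrac12\PR[k\ne m]\ge\mathrm{PC}(k\bm T,m\bm T)$ only make explicit what the paper leaves implicit.
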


\begin{example}[Normal variances, unknown means]\label{ex:normalvar}
Let the columns of $X$ be i.i.d.\ $N_p(\bm\mu,\Sigma)$ with
$\Sigma=\diag(\sigma_1^2,\dots,\sigma_p^2)$, and consider estimating
$(\sigma_1^2,\dots,\sigma_p^2)$ under Anderson's quadratic loss. With
$\bm U_i=\bm X_i-\bm X_n$ and
$\bm T(U)=\big(\sum_{j=1}^{n-1}u_{1,j}^2,\dots,\sum_{j=1}^{n-1}u_{p,j}^2\big)$,
equation (\ref{eq:cstar}) gives
\[
c^*=\frac{\sum_{i=1}^p\sum_{j=1}^{n-1}u_{i,j}^2}
{\sum_{i=1}^p\big(\sum_{j=1}^{n-1}u_{i,j}^2\big)^2},
\]
and the Pitman closest equivariant estimator is $\bm\delta_\pi(X)=m(Z)\bm T(U)$. Since
$\bm T(U)$ is a function of the complete sufficient statistic, $m(Z)$ is the
unconditional median of $c^*$ under $\Sigma=I$.
\end{example}

\subsection{Estimating the location parameters}

Now consider estimating $\bm\alpha$ under
$L(\bm d,\bm\alpha,\bm\beta)=\sum_{i=1}^p h\big((d_i-\alpha_i)/\beta_i\big)$ with
strictly convex $h$; then $\tilde g_{\bm a,C}(\bm d)=C\bm d+\bm a$ and equivariance means
\begin{equation}\label{eq:locequiv}
\bm\delta(CX+\bm a\otimes\1^\intercal)=C\bm\delta(X)+\bm a .
\end{equation}
For any two equivariant estimators, their difference is location invariant and scale
equivariant, so by Theorem~\ref{thm:maxinv} and the discussion above, given a reference
equivariant estimator $\bm T(X)$ and a componentwise positive, location invariant and
scale equivariant statistic $\bm T_e(U)$, every equivariant estimator in the restricted
class has the form
\begin{equation}\label{eq:locclass}
\bm\delta(X)=\bm T(X)-w(Z)\,\bm T_e(U)
\end{equation}
with a common scalar $w(Z)$. Transitivity of the parameter space and
Theorem~\ref{thm:orbit} reduce all comparisons to $(\bm\alpha,\bm\beta)=(\mathbf 0,\1)$,
and Lemma~\ref{lem:median} yields the following.

\begin{theorem}\label{thm:loc}
Let $\bm T(X)$ be an equivariant estimator of $\bm\alpha$ and $\bm T_e(U)$ a
location invariant, scale equivariant statistic with positive components a.e. Under the
strictly convex loss
$L(\bm d,\bm\alpha,\bm\beta)=\sum_{i=1}^p h\big((d_i-\alpha_i)/\beta_i\big)$, a Pitman
closest equivariant estimator of $\bm\alpha$ within the class (\ref{eq:locclass}) is
\begin{equation}\label{eq:pceloc}
\bm\delta_\pi(X)=\bm T(X)-m(Z)\,\bm T_e(U),
\end{equation}
where $m(Z)$ is a median of the conditional distribution given $Z$ and
$(\bm\alpha,\bm\beta)=(\mathbf 0,\1)$ of $c^*$ defined by
\[
\bm T_e(U)^\intercal\,\nabla L\big(\bm T(X)-c^*\bm T_e(U)\big)=0 .
\]
\end{theorem}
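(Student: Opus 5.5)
The plan is to mirror the proof of Theorem~\ref{thm:scale}: reduce the comparison to a single parameter value, and then apply Lemma~\ref{lem:median} conditionally on the maximal invariant $Z$.

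First, take two members of the class (\ref{eq:locclass}):
\[
\bm\delta_a(X)=\bm T(X)-w_a(Z)\bm T_e(U),\qquad \bm\delta_b(X)=\bm T(X)-w_b(Z)\bm T_e(U).
\]
Both are equivariant in the sense of (\ref{eq:locequiv}). This holds because $\bm T$ is equivariant, $\bm T_e(CU)=C\bm T_e(U)$, and $Z$ is invariant, so $w(Z)$ is unchanged under $g_{\bm a,C}$. The loss $\sum_i h((d_i-\alpha_i)/\beta_i)$ is invariant under $G$, and $\bar G$ acts transitively on $\R^p\times(\R_+)^p$. Theorem~\ref{thm:orbit} therefore shows that $\mathrm{PC}(\bm\delta_a,\bm\delta_b;\bm\alpha,\bm\beta)$ does not depend on $(\bm\alpha,\bm\beta)$. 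It may be computed at $(\mathbf 0,\1)$, where
\[
L(\bm\delta,\mathbf 0,\1)=L\big(\bm T(X)-w(Z)\bm T_e(U)\big),
\qquad L(\bm v)=\sum_i h(v_i).
\]
Here $L$ is strictly convex as a sum of strictly convex functions, and $L(\mathbf 0)=0$ after the normalization $h(0)=0$. If that normalization is not imposed, the additive constant cancels in every comparison and is irrelevant.

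Second, I would apply Lemma~\ref{lem:median}, but with $\theta$ replaced by the random vector $\bm T(X)$ rather than a constant. This is the main obstacle, since the lemma as stated takes $\theta$ constant. Inspecting its proof removes the difficulty. For fixed $z$, the argument only uses that, for almost every realization, $\phi(c)=L(\bm T(X)-c\bm T_e(U))$ is strictly convex in $c$, which holds because $\bm T_e(U)\neq\mathbf 0$ a.s. (its components are positive). Hence $\phi$ has a unique minimizer $c^*$ solving $\bm T_e(U)^\intercal\nabla L(\bm T(X)-c^*\bm T_e(U))=0$. It is strictly decreasing to the left of $c^*$ and strictly increasing to the right. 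The conditional-median argument then goes through verbatim with $(\bm T(X),\bm T_e(U))$ random given $Z=z$. So I would either restate the lemma with $\theta$ allowed to be random (the proof uses nothing else), or absorb $\bm T(X)$ by conditioning, noting that only the joint conditional law of $(\bm T(X),\bm T_e(U))$ given $Z$ enters.

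Two minor technical points need care. The first is existence of $c^*$: a strictly convex $\phi$ on $\R$ might have no minimizer if it is monotone. I would handle this by allowing $c^*=\pm\infty$ in the median definition, or by noting that coercivity of $h$ (e.g. $h(t)\to\infty$ as $|t|\to\infty$) guarantees a finite minimizer. The second is measurability of $m(Z)$, which follows by choosing, e.g., the smallest conditional median.

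The conclusion then follows exactly as in Theorem~\ref{thm:scale}. For any competitor $k(Z)$,
\[
\PR_{(\mathbf 0,\1)}\big[L(\bm\delta_\pi)<L(\bm T-k(Z)\bm T_e)\big]\ge\tfrac12\PR[k(Z)\ne m(Z)].
\]
Applying the same inequality with the roles of $m$ and $k$ swapped, the probability that $\bm\delta_k$ is strictly closer is at most $\PR[k\ne m]$ minus the left side, hence at most $\tfrac12\PR[k\neq m]$. This gives $\mathrm{PC}(\bm\delta_\pi,\bm\delta_k)\ge\mathrm{PC}(\bm\delta_k,\bm\delta_\pi)$ at $(\mathbf 0,\1)$, and hence, by Theorem~\ref{thm:orbit}, for all $(\bm\alpha,\bm\beta)$.
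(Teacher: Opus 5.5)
Your proposal is correct and follows the paper's route. Theorem~\ref{thm:orbit} reduces the comparison to $(\bm\alpha,\bm\beta)=(\mathbf 0,\1)$, and Lemma~\ref{lem:median} is then applied conditionally on $Z$ with $\bm T_e(U)$ in the role of $T$. You also fill in details the paper skips:
\begin{itemize}
\item the lemma is needed with the random vector $\bm T(X)$ in place of the constant $\theta$, which is harmless because its proof is pointwise in the realization;
\item you address the existence of the minimizer $c^*$ and the measurability of $m(Z)$.
\end{itemize}
One small correction of wording: in the last step you are not applying the lemma with $m$ and $k$ swapped. What you actually use is that the two strict events are disjoint subsets of $\{k(Z)\ne m(Z)\}$.
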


\begin{remark}
Choosing $(\bm T,\bm T_e)$ as functions of a complete sufficient statistic makes them
independent of the ancillary $Z$, so $m(Z)$ is the unconditional median of $c^*$ under
$(\bm\alpha,\bm\beta)=(\mathbf 0,\1)$.
\end{remark}

\begin{example}[Normal means]\label{ex:normalmean}
In the setup of Example~\ref{ex:normalvar}, consider estimating
$\bm\mu=(\mu_1,\dots,\mu_p)$ under the scaled squared error loss
$L(\bm d,\bm\mu)=\sum_{i=1}^p\big[(d_i-\mu_i)/\sigma_i\big]^2$. Take
\[
\bm T(X)=\Big(\tfrac1n\textstyle\sum_j x_{1,j},\dots,\tfrac1n\textstyle\sum_j x_{p,j}\Big),
\qquad
\bm T_e(U)=\Big(\sqrt{\textstyle\sum_j u_{1,j}^2},\dots,\sqrt{\textstyle\sum_j u_{p,j}^2}\Big).
\]
Then $\bm T$ is location--scale equivariant, $\bm T_e$ is location invariant and scale
equivariant, and
\[
c^*=\frac{\sum_{i=1}^p \bar x_i\sqrt{\sum_j u_{i,j}^2}}{\sum_{i=1}^p\sum_j u_{i,j}^2},
\qquad \bar x_i=\tfrac1n\textstyle\sum_j x_{i,j}.
\]
The Pitman closest equivariant estimator of $\bm\mu$ is
$\bm\delta_\pi(X)=\bm T(X)-m(Z)\bm T_e(U)$, where, by completeness and sufficiency of
$(\bm T,\bm T_e)$, $m(Z)$ is the unconditional median of $c^*$ under
$(\bm\mu,\Sigma)=(\mathbf 0,I)$.
\end{example}

\begin{example}[Multivariate uniform location]\label{ex:uniformloc}
Let the columns of $X$ be i.i.d.\ $U_p(\bm\alpha,\bm\beta)$ (componentwise uniform on
$(\alpha_i,\beta_i)$), and consider estimating $\bm\alpha$ under
$L(\bm d,\bm\alpha)=\sum_{i=1}^p\big[(d_i-\alpha_i)/(\beta_i-\alpha_i)\big]^2$. With
$X_{i,(1)}=\min_j x_{i,j}$ and $X_{i,(n)}=\max_j x_{i,j}$, take
$\bm T(X)=(X_{1,(1)},\dots,X_{p,(1)})$ and
$\bm T_e(U)=(X_{1,(n)}-X_{1,(1)},\dots,X_{p,(n)}-X_{p,(1)})$. Then
\[
c^*=\frac{\sum_{i=1}^p X_{i,(1)}\big(X_{i,(n)}-X_{i,(1)}\big)}
{\sum_{i=1}^p\big(X_{i,(n)}-X_{i,(1)}\big)^2},
\]
and the Pitman closest equivariant estimator of $\bm\alpha$ is
$\bm\delta_\pi(X)=\bm T(X)-m(Z)\bm T_e(U)$, with $m(Z)$ the unconditional median of
$c^*$ under $\bm\alpha=\mathbf 0$, $\bm\beta=\1$. The scale parameters $\bm\beta$ are
treated similarly via Theorem~\ref{thm:lsscale}.
\end{example}

\section{Applications}\label{sec:app}

Location--scale families are widely used to model lifetimes in reliability and survival
analysis, and the Pitman closeness comparison of estimators in specific lifetime
distributions has attracted continued interest (e.g.\ Davies, 2021; Volovskiy and Kamps,
2021). In this section we evaluate the proposed estimators for the Rayleigh distribution
and for a competing risks model with Rayleigh component lifetimes, comparing them with
the maximum likelihood estimator (MLE) and Bayes estimators both under the Pitman
closeness criterion and under average-error criteria.

\subsection{The Rayleigh distribution}

The Rayleigh distribution with scale parameter $\sigma$ has density
$f(x;\sigma)=(2x/\sigma^2)e^{-x^2/\sigma^2}$, $x>0$. Writing $\theta=\sigma^2$,
\begin{equation}\label{eq:rayleigh}
f(x;\theta)=\frac{2x}{\theta}\,e^{-x^2/\theta},\qquad x>0,\ \theta>0,
\end{equation}
which is a scale family in $\theta$. For a random sample $X=(X_1,\dots,X_n)$, the MLE is
\begin{equation}\label{eq:mle}
\hat\theta_{\mathrm{MLE}}=\frac1n\sum_{i=1}^n x_i^2 .
\end{equation}
With the conjugate inverse gamma prior $\theta\sim I\Gamma(\alpha,\beta)$, the posterior
is $I\Gamma\big(n+\alpha,\sum_i x_i^2+\beta\big)$, and minimizing the posterior risk
under the scaled squared error loss gives
\begin{equation}\label{eq:bayes}
\hat\theta_{\mathrm{Bayes}}=\frac{\sum_{i=1}^n x_i^2+\beta}{\alpha+n+1}.
\end{equation}
The MLE (\ref{eq:mle}) is equivariant and is a complete sufficient statistic. Since
$X_i^2\sim\mathrm{Exp}(\theta)$, under $\theta=1$ we have $\frac1n\sum_i X_i^2\sim
\Gamma(n,1/n)$, and by Theorem~\ref{thm:scale} (with $p=1$) the Pitman closest
equivariant estimator is
\begin{equation}\label{eq:pce}
\hat\theta_{\pi}=\frac{\frac1n\sum_{i=1}^n x_i^2}{m_n},
\end{equation}
where $m_n$ is the median of the $\Gamma(n,1/n)$ distribution, computed numerically.

\subsubsection{Simulation study}

For sample sizes $n=10,20,30,100$ and true values $\theta=0.5,1,2$, we generated 10{,}000
samples from (\ref{eq:rayleigh}) and computed (\ref{eq:mle}), (\ref{eq:pce}) and
(\ref{eq:bayes}), the latter with (i) prior mean equal to the true $\theta$ with
$\alpha=1.1$ and $\alpha=2$, and (ii) an iteratively updated prior starting from
$\alpha=\beta=1.5$. Figure~\ref{fig:gpc} displays the pairwise generalized Pitman
closeness under the scaled squared error loss for $\theta=0.5$; the picture under the
absolute error loss is identical up to Monte Carlo error, confirming that the Pitman
closeness comparison is the same for all strictly convex losses, so the estimator
(\ref{eq:pce}), derived under the scaled squared error loss, remains optimal under every
strictly convex loss. In all configurations $\hat\theta_\pi$ is Pitman closer than both
the MLE and each Bayes estimator.

\begin{figure}[htbp]
\centering
\includegraphics[width=\textwidth]{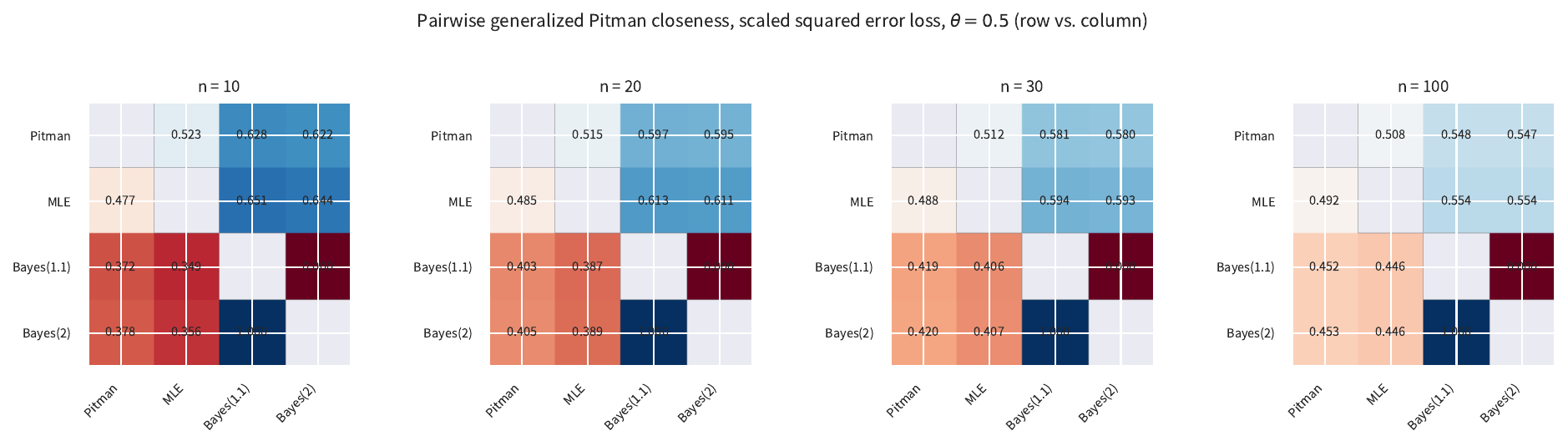}
\caption{Pairwise generalized Pitman closeness among the Pitman closest equivariant
estimator, the MLE and two Bayes estimators for the Rayleigh distribution, scaled
squared error loss, $\theta=0.5$, 100{,}000 replications. Entry $(i,j)$ is
$\PR[L(T_i,\theta)<L(T_j,\theta)]$.}
\label{fig:gpc}
\end{figure}

Tables~\ref{tab:msse} and \ref{tab:mae} report the mean scaled squared error and the mean
absolute error. Under average-error criteria no estimator dominates uniformly: a Bayes
estimator with a well-tuned prior can have the smallest average error, but its
performance depends heavily on the prior hyperparameters, and for large samples all
methods are close. Under the Pitman closeness criterion, however, $\hat\theta_\pi$ is
uniformly the best, and it is completely free of prior inputs.

\begin{table}[htbp]
\centering\small
\caption{Mean scaled squared error of the estimators, Rayleigh distribution.}
\label{tab:msse}
\begin{tabular}{cc ccccc}
\toprule
$\theta$ & $n$ & Pitman & MLE & Bayes$_{1.1}$ & Bayes$_{2}$ & Bayes$_{\mathrm{iter}}$\\
\midrule
\multirow{4}{*}{0.5}
& 10 & 0.1111 & 0.1023 & 0.0956 & 0.0828 & 0.0971\\
& 20 & 0.0523 & 0.0503 & 0.0491 & 0.0454 & 0.0496\\
& 30 & 0.0332 & 0.0325 & 0.0324 & 0.0307 & 0.0326\\
& 100 & 0.0099 & 0.0099 & 0.0099 & 0.0097 & 0.0099\\
\midrule
\multirow{4}{*}{1.0}
& 10 & 0.1106 & 0.1020 & 0.0958 & 0.0830 & 0.0973\\
& 20 & 0.0522 & 0.0502 & 0.0496 & 0.0458 & 0.0501\\
& 30 & 0.0334 & 0.0336 & 0.0335 & 0.0317 & 0.0337\\
& 100 & 0.0099 & 0.0099 & 0.0098 & 0.0097 & 0.0099\\
\midrule
\multirow{4}{*}{2.0}
& 10 & 0.1084 & 0.1000 & 0.0947 & 0.0821 & 0.0963\\
& 20 & 0.0521 & 0.0501 & 0.0492 & 0.0454 & 0.0496\\
& 30 & 0.0337 & 0.0330 & 0.0329 & 0.0311 & 0.0331\\
& 100 & 0.0098 & 0.0097 & 0.0097 & 0.0095 & 0.0097\\
\bottomrule
\end{tabular}
\end{table}

\begin{table}[htbp]
\centering\small
\caption{Mean absolute error of the estimators, Rayleigh distribution.}
\label{tab:mae}
\begin{tabular}{cc ccccc}
\toprule
$\theta$ & $n$ & Pitman & MLE & Bayes$_{1.1}$ & Bayes$_{2}$ & Bayes$_{\mathrm{iter}}$\\
\midrule
\multirow{4}{*}{0.5}
& 10 & 0.1289 & 0.1259 & 0.1291 & 0.1202 & 0.1301\\
& 20 & 0.0910 & 0.0897 & 0.0909 & 0.0873 & 0.0913\\
& 30 & 0.0723 & 0.0717 & 0.0731 & 0.0711 & 0.0733\\
& 100 & 0.0396 & 0.0395 & 0.0399 & 0.0395 & 0.0399\\
\midrule
\multirow{4}{*}{1.0}
& 10 & 0.2592 & 0.2516 & 0.2575 & 0.2396 & 0.2595\\
& 20 & 0.1814 & 0.1789 & 0.1824 & 0.1752 & 0.1832\\
& 30 & 0.1476 & 0.1463 & 0.1491 & 0.1450 & 0.1495\\
& 100 & 0.0791 & 0.0789 & 0.0791 & 0.0784 & 0.0792\\
\midrule
\multirow{4}{*}{2.0}
& 10 & 0.5105 & 0.4960 & 0.5142 & 0.4786 & 0.5185\\
& 20 & 0.3607 & 0.3556 & 0.3625 & 0.3483 & 0.3641\\
& 30 & 0.2898 & 0.2872 & 0.2940 & 0.2859 & 0.2949\\
& 100 & 0.1575 & 0.1571 & 0.1579 & 0.1566 & 0.1581\\
\bottomrule
\end{tabular}
\end{table}

\subsubsection{Real data analysis: ball bearing failures}

We applied the three estimators to the classical deep-groove ball bearing data set
(Lawless, 2011), consisting of the numbers of millions of revolutions before failure for
23 bearings. The Kolmogorov--Smirnov test (statistic 0.1375, $p$-value 0.7273) and a
chi-squared goodness-of-fit test (statistic 1.7790, $p$-value 0.7763), together with the
Q--Q plot, support the Rayleigh model. To assess robustness, we removed two observations
at random ten times, refitted the model by each method (Bayes with $\alpha=2$, $\beta=2$),
and computed the K--S statistic of the fitted model against the full data set; the
results are in Table~\ref{tab:ks}. The Bayes estimator is occasionally the best and
occasionally the worst, while the MLE and the Pitman closest equivariant estimator are
stable and nearly indistinguishable, consistent with the simulation study.

\begin{table}[htbp]
\centering\small
\caption{K--S statistics of models fitted with two deleted observations (indices shown),
evaluated against the full ball bearing data set.}
\label{tab:ks}
\begin{tabular}{cccc}
\toprule
Deleted data & MLE & Pitman & Bayes$_{1.1}$\\
\midrule
(09,\,21) & 0.1212 & 0.1259 & 0.1393\\
(03,\,15) & 0.1483 & 0.1544 & 0.1221\\
(14,\,19) & 0.1301 & 0.1363 & 0.1335\\
(07,\,19) & 0.1355 & 0.1406 & 0.1301\\
(08,\,09) & 0.1559 & 0.1618 & 0.1173\\
(02,\,11) & 0.1557 & 0.1616 & 0.1174\\
(11,\,12) & 0.1470 & 0.1519 & 0.1229\\
(14,\,22) & 0.1374 & 0.1337 & 0.1693\\
(11,\,22) & 0.1371 & 0.1333 & 0.1689\\
(15,\,20) & 0.1168 & 0.1162 & 0.1467\\
\bottomrule
\end{tabular}
\end{table}

\subsection{Competing risks model with Rayleigh component lifetimes}

Competing risks data arise when a unit can fail from any of $K$ mutually exclusive causes
and only the minimum lifetime and its cause are observed (Cox, 1959). Let the latent
lifetime of unit $i$ under cause $k$ be $X_{i,k}$, independent across $k$, with
$X_{i,k}\sim\mathrm{Rayleigh}(\theta_k)$ in the parametrization (\ref{eq:rayleigh}). The
observed data are $(X_i,\delta_i)$, $i=1,\dots,n$, where $X_i=\min\{X_{i,1},\dots,X_{i,K}\}$
and $\delta_i\in\{1,\dots,K\}$ is the failure cause. Estimating
$(\theta_1,\dots,\theta_K)$ from such data is a parameter estimation problem for a
multivariate scale family, to which the theory of Section~\ref{sec:scale} applies. We
focus on $K=2$.

Writing $n_k=\sum_{i=1}^n I(\delta_i=k)$, the likelihood factorizes and the MLEs are
\begin{equation}\label{eq:crmle}
\hat\theta_{k,\mathrm{MLE}}=\frac{\sum_{i=1}^n x_i^2}{n_k},\qquad k=1,2.
\end{equation}
Under independent inverse gamma priors $\theta_k\sim I\Gamma(\alpha_k,\beta_k)$, the
posteriors are independent
$I\Gamma\big(\alpha_k+n_k,\ \beta_k+\sum_i x_i^2\big)$, and the Bayes estimators under the
scaled squared error loss are
\begin{equation}\label{eq:crbayes}
\hat\theta_{k,\mathrm{Bayes}}=\frac{\beta_k+\sum_{i=1}^n x_i^2}{\alpha_k+n_k+1}.
\end{equation}
The MLEs (\ref{eq:crmle}) are equivariant, and
$\big(\sum_i x_i^2/n_1,\ \sum_i x_i^2/n_2\big)$ is a function of the complete sufficient
statistic for $(\theta_1,\theta_2)$. The Pitman closest equivariant estimator given by
Lemma~\ref{lem:median} and Theorem~\ref{thm:scale} is
\begin{equation}\label{eq:crpce}
\hat\theta_{k,\pi}=m(c^*)\,\frac{\sum_{i=1}^n x_i^2}{n_k},\qquad k=1,2,
\end{equation}
where
\begin{equation}\label{eq:cstarcr}
c^*=\left(\frac{\sum_i x_i^2}{\theta_1 n_1}+\frac{\sum_i x_i^2}{\theta_2 n_2}\right)
\Bigg/
\left[\left(\frac{\sum_i x_i^2}{\theta_1 n_1}\right)^2
+\left(\frac{\sum_i x_i^2}{\theta_2 n_2}\right)^2\right]
\end{equation}
and $m(c^*)$ is the median of $c^*$. Since the parameter space here is not transitive
($n_1,n_2$ depend on the parameters), Theorem~\ref{thm:orbit} does not reduce the
comparison to a single parameter value; in the simulation we evaluate $m(c^*)$ by Monte
Carlo with $(\theta_1,\theta_2)$ replaced by the MLEs.

For $n=10,20,30,100$ and $(\theta_1,\theta_2)=(0.5,2)$ and $(1,2)$, we generated 10{,}000
competing risks samples. In every configuration the Pitman closest equivariant estimator
is Pitman closer than the MLE and the Bayes estimator (with prior means set at the true
values, $\alpha_1=\alpha_2=2$), in agreement with the theory. Table~\ref{tab:cr} reports
the mean errors: the multivariate Pitman closest equivariant estimator also has smaller
mean scaled squared error and mean absolute error than the MLE at all sample sizes. We do
not have a theoretical proof of this average-error superiority; it is an empirical
finding. The Bayes estimator can have still smaller average error, but only with prior
hyperparameters tuned using the true parameter values.

\begin{table}[htbp]
\centering\small
\caption{Mean scaled squared error and mean absolute error, competing risks model with
Rayleigh component lifetimes.}
\label{tab:cr}
\begin{tabular}{cc ccc ccc}
\toprule
& & \multicolumn{3}{c}{Mean scaled squared error} & \multicolumn{3}{c}{Mean absolute error}\\
\cmidrule(lr){3-5}\cmidrule(lr){6-8}
$(\theta_1,\theta_2)$ & $n$ & Pitman & MLE & Bayes & Pitman & MLE & Bayes\\
\midrule
\multirow{4}{*}{(0.5,\,2.0)}
& 10 & 0.5119 & 0.6682 & 0.2999 & 1.1368 & 1.2247 & 0.9750\\
& 20 & 0.7689 & 0.9164 & 0.1717 & 1.1227 & 1.1927 & 0.6986\\
& 30 & 0.5670 & 0.6412 & 0.1353 & 0.9129 & 0.9523 & 0.6107\\
& 100 & 0.0755 & 0.0789 & 0.0542 & 0.4172 & 0.4230 & 0.3739\\
\midrule
\multirow{4}{*}{(1.0,\,2.0)}
& 10 & 0.7960 & 1.0796 & 0.2706 & 1.3411 & 1.4677 & 0.9763\\
& 20 & 0.4089 & 0.4852 & 0.1642 & 0.9361 & 0.9863 & 0.7383\\
& 30 & 0.2033 & 0.2286 & 0.1200 & 0.7205 & 0.7425 & 0.6259\\
& 100 & 0.0483 & 0.0503 & 0.0423 & 0.3819 & 0.3863 & 0.3643\\
\bottomrule
\end{tabular}
\end{table}

\section{Discussion}

For multivariate scale and location--scale families with independent components, we have
extended the univariate theory of Pitman closest equivariant estimation of Nayak (1990)
and Zhou and Nayak (2012). Two features of the multivariate problem deserve emphasis.
First, the non-transitivity of the Pitman closeness criterion, which equivariance
neutralizes in the univariate case, resurfaces in higher dimensions, and an additional,
weak restriction—motivated by the formal equivariance principle—is needed for an optimum
to exist. This mirrors, in the Pitman closeness context, the classical inadmissibility
phenomena for best invariant rules under risk criteria (Stein, 1956; Portnoy and Stein,
1971), where extra restrictions are likewise needed. Second, once the class is suitably
restricted, the optimum estimators retain the attractive features of the univariate
theory: they are median adjustments of any given equivariant estimator, they depend on
the loss only through strict convexity, and the adjustment factors can be tabulated in
advance when a complete sufficient statistic exists.

Several directions remain open. Our main results assume independent components; for
dependent components, e.g.\ a general multivariate normal with unrestricted covariance
matrix, the problem belongs to the location--scatter family, where even the
characterization of equivariant estimators is delicate (Anderson, 2003, p.~280; Maronna,
1976) and the choice of transformation group affects the existence and form of optima.
The competing risks application suggests, but does not prove, average-error superiority
of the Pitman closest estimator over the MLE; a theoretical explanation would be
valuable. Finally, the prediction framework of Zhou and Nayak (2012) and estimation from
censored samples are natural settings for multivariate extensions of the present results.

\section*{Author contributions}
The theoretical results, simulation experiments, and data analysis were developed in the
Master's thesis of Y.L.\ (Sun Yat-sen University, 2022) under the supervision of H.Z.\
(conceptualization and methodology). Both authors contributed to the interpretation of
the results and approved the final manuscript.

\section*{Acknowledgement}
The authors thank the School of Mathematics, Sun Yat-sen University, for its support. The
English manuscript was prepared from the thesis with the assistance of an AI tool (Kimi,
Moonshot AI), which restructured and translated the material, reformulated the
restriction of Section~\ref{sec:restriction} in group-theoretic terms, and independently
reran the Monte Carlo verification of the simulation results; the authors reviewed all
content and take full responsibility for it.


\begin{thebibliography}{99}\small

\bibitem{anderson} Anderson, T.W., 2003. An Introduction to Multivariate Statistical
Analysis, third ed. Wiley, New York.

\bibitem{berger} Berger, J.O., 1985. Statistical Decision Theory and Bayesian Analysis,
second ed. Springer, New York.

\bibitem{boyd} Boyd, S., Vandenberghe, L., 2004. Convex Optimization. Cambridge
University Press, Cambridge.

\bibitem{cox} Cox, D.R., 1959. The analysis of exponentially distributed life-times with
two types of failure. Journal of the Royal Statistical Society: Series B 21, 411--421.

\bibitem{davies} Davies, K.F., 2021. Pitman closeness results for Type-I hybrid censored
data from exponential distribution. Journal of Statistical Computation and Simulation 91,
58--80.

\bibitem{evans} Evans, L.C., Gariepy, R.F., 1992. Measure Theory and Fine Properties of
Functions. CRC Press, Boca Raton.

\bibitem{eaton} Eaton, M.L., 1989. Group Invariance Applications in Statistics. Institute
of Mathematical Statistics, Hayward, CA.

\bibitem{fraser} Fraser, D.A.S., 1968. The Structure of Inference. Wiley, New York.

\bibitem{ghosh} Ghosh, M., Sen, P.K., 1989. Median unbiasedness and Pitman closeness.
Journal of the American Statistical Association 84, 1089--1091.

\bibitem{keating} Keating, J.P., Mason, R.L., Sen, P.K., 1993. Pitman's Measure of
Closeness: A Comparison of Statistical Estimators. SIAM, Philadelphia.

\bibitem{kiefer} Kiefer, J., 1957. Invariance, minimax sequential estimation, and
continuous time processes. The Annals of Mathematical Statistics 28, 573--601.

\bibitem{kourouklis1} Kourouklis, S., 1995a. Estimating powers of the scale parameter of
an exponential distribution with unknown location under Pitman's measure of closeness.
Journal of Statistical Planning and Inference 48, 185--195.

\bibitem{kourouklis2} Kourouklis, S., 1995b. Estimation of an exponential quantile under
Pitman's measure of closeness. Canadian Journal of Statistics 23, 257--268.

\bibitem{kourouklis3} Kourouklis, S., 1996. Improved estimation under Pitman's measure of
closeness. Annals of the Institute of Statistical Mathematics 48, 509--518.

\bibitem{kubokawa1} Kubokawa, T., 1990. Estimating powers of the generalized variance
under the Pitman closeness criterion. Canadian Journal of Statistics 18, 59--62.

\bibitem{kubokawa2} Kubokawa, T., 1991. Equivariant estimation under the Pitman closeness
criterion. Communications in Statistics---Theory and Methods 20, 3499--3523.

\bibitem{lawless} Lawless, J.F., 2011. Statistical Models and Methods for Lifetime Data.
John Wiley \& Sons, Hoboken.

\bibitem{lehmann} Lehmann, E.L., Casella, G., 1998. Theory of Point Estimation, second
ed. Springer, New York.

\bibitem{maronna} Maronna, R., 1976. Robust $M$-estimators of multivariate location and
scatter. The Annals of Statistics 4, 51--67.

\bibitem{nayak} Nayak, T.K., 1990. Estimation of location and scale parameters using
generalized Pitman nearness criterion. Journal of Statistical Planning and Inference 24,
259--268.

\bibitem{nayak98} Nayak, T.K., 1998. On equivariant estimation of the location of
elliptical distributions under Pitman closeness criterion. Statistics \& Probability
Letters 36, 373--378.

\bibitem{peddada} Peddada, S.D., 1985. A short note on Pitman's measure of nearness. The
American Statistician 39, 298--299.

\bibitem{pitman} Pitman, E.J.G., 1937. The closest estimates of statistical parameters.
Proceedings of the Cambridge Philosophical Society 33, 212--222.

\bibitem{portnoy} Portnoy, S., Stein, C., 1971. Inadmissibility of the best invariant
test in three or more dimensions. The Annals of Mathematical Statistics 42, 799--801.

\bibitem{rao} Rao, C.R., Keating, J.P., Mason, R.L., 1986. The Pitman nearness criterion
and its determination. Communications in Statistics---Theory and Methods 15, 3173--3191.

\bibitem{robbins} Robbins, H., 1951. Asymptotically subminimax solutions of compound
statistical decision problems. In: Proceedings of the Second Berkeley Symposium on
Mathematical Statistics and Probability. University of California Press, Berkeley,
pp.~131--148.

\bibitem{stein} Stein, C., 1956. Inadmissibility of the usual estimator for the mean of a
multivariate normal distribution. In: Proceedings of the Third Berkeley Symposium on
Mathematical Statistics and Probability, vol.~1. University of California Press,
Berkeley, pp.~197--206.

\bibitem{volovskiy} Volovskiy, G., Kamps, U., 2021. Comparison of likelihood-based
predictors of future Pareto and Lomax record values in terms of Pitman closeness.
Communications in Statistics---Theory and Methods, in press.

\bibitem{zhou2012} Zhou, H., Nayak, T.K., 2012. Pitman closest equivariant estimators and
predictors under location--scale models. Journal of Statistical Planning and Inference
142, 1367--1377.

\bibitem{zhou2014} Zhou, H., Nayak, T.K., 2014. A note on existence and construction of
invariant loss functions. Statistics 48, 1335--1343.

\end{thebibliography}
\end{document}